 \theoremstyle{plain}
\newtheorem{thm}{Theorem}[section]
  \theoremstyle{remark}
  \newtheorem*{acknowledgement*}{Acknowledgement}
  \theoremstyle{plain}
  \newtheorem{lem}[thm]{Lemma}
  \theoremstyle{plain}
  \newtheorem{prop}[thm]{Proposition}
\begin{document}

\title{Cuspidal representations which are not strongly cuspidal}

\author{Alexander Stasinski}

\address{DPMMS, University of Cambridge, Wilberforce Road, Cambridge, CB3
0WB, U.~K.}

\email{a.stasinski@dpmms.cam.ac.uk}

\begin{abstract}
We give a description of all the cuspidal representations of $\mbox{GL}_{4}(\mathfrak{o}_{2})$
in the sense of \cite{Aubert-Onn-Prasad}. This shows in particular
the existence of representations which are cuspidal, yet are not strongly
cuspidal, that is, do not have orbit with irreducible characteristic
polynomial mod $\mathfrak{p}$. As was shown in \cite{Aubert-Onn-Prasad},
this phenomenon cannot occur for $\mbox{GL}_{n}$, when $n$ is prime.
\end{abstract}
\maketitle
\begin{acknowledgement*}
This paper will appear in modified form as an appendix to a forthcoming
version of \cite{Aubert-Onn-Prasad}. I wish to thank U. Onn for comments
which led to simplifications of the exposition, and for explaining
the idea that cuspidal representations of $\mbox{GL}_{2}(\mathbb{F}_{q^{2}})$
should give rise to cuspidal representations of $\mbox{GL}_{4}(\mathfrak{o}_{2})$
in the function field case.
\end{acknowledgement*}

\section{Preliminaries and reductions}

Recall the notation of \cite{Aubert-Onn-Prasad} regarding the groups
$G_{\lambda}$, and the definitions of geometric/infinitesimal induction,
and cuspidality. We consider an arbitrary local field $F$ with ring
of integers $\mathfrak{o}$, maximal ideal $\mathfrak{p}$, and finite
residue field $\mathbb{F}_{q}$. Let $n=4$ and $k=2$, and put $G:=G_{2^{4}}\cong\mbox{GL}_{4}(\mathfrak{o}_{2})$,
where $\mathfrak{o}_{2}=\mathfrak{o}/\mathfrak{p}^{2}$. If $\pi$
is a cuspidal representation of $G$, then by \cite{Aubert-Onn-Prasad},
Proposition 4.4 it is primary, that is, its orbit in $M_{4}(\mathbb{F}_{q})$
consists of matrices whose characteristic polynomial is of the form
$f(X)^{n}$, where $f(X)$ is an irreducible polynomial. If $n=1$,
then $\pi$ is strongly cuspidal (by definition), and such representations
were described in \cite{Aubert-Onn-Prasad}, Sect.~5. On the other
hand, $f(X)$ cannot have degree 1, because then it would be infinitesimally
induced from $G_{(2,1^{3})}$, up to $1$-dimensional twist (cf. the
end of the proof of Theorem 4.3 in \cite{Aubert-Onn-Prasad}). We
are thus reduced to considering representations whose characteristic
polynomial is a reducible power of a non-linear irreducible polynomial.
In the situation we are considering, there is only one such possibility,
namely the case where $f(X)$ is quadratic, and $n=2$. Let $\eta$
denote an element which generates the extension $\mathbb{F}_{q^{2}}/\mathbb{F}_{q}$.
We consider $M_{2}(\mathbb{F}_{q^{2}})$ as embedded in $M_{4}(\mathbb{F}_{q})$
via the embedding $\mathbb{F}_{q^{2}}\hookrightarrow M_{2}(\mathbb{F}_{q})$,
by choosing the basis $\{1,\eta\}$ for $\mathbb{F}_{q^{2}}$ over
$\mathbb{F}_{q}$. Rational canonical form implies that in $M_{4}(\mathbb{F}_{q})$
there are two conjugation orbits with two irreducible $2\times2$
blocks, one regular, and one which is not regular (we shall call the
latter \emph{irregular}), represented by the following elements, respectively:\[
\beta_{1}=\begin{pmatrix}\eta & 1\\
0 & \eta\end{pmatrix},\qquad\beta_{2}=\begin{pmatrix}\eta & 0\\
0 & \eta\end{pmatrix},\]
Therefore, any irreducible cuspidal non-strongly cuspidal representation
of $G$ has exactly one of the elements $\beta_{1}$ or $\beta_{2}$
in its orbit.

Denote by $K_{1}$ the kernel of the reduction map $G=G_{2^{4}}\rightarrow G_{1^{4}}$.
In the following we will let $\psi$ be a fixed non-trivial additive
character on $\mathfrak{o}$ with conductor $\mathfrak{p}^{2}$. Then
for each $\beta\in M_{4}(\mathbb{F}_{q})$ we have a character $\psi_{\beta}:K_{1}\rightarrow\mathbb{C}^{\times}$
defined by \[
\psi_{\beta}(x)=\psi(\mathrm{Tr}(\beta(x-1))).\]
The group $G$ acts on its normal subgroup $K_{1}$ via conjugation,
and thus on the set of characters of $K_{1}$ via the {}``coadjoint
action''. For any character $\psi_{\beta}$ of $K_{1}$, we write
\[
G(\psi_{\beta}):=\Stab_{G}(\psi_{\beta}).\]
By Proposition 2.3 in \cite{Hill_regular}, the stabilizer $G(\psi_{\beta})$
is the preimage of the centralizer $C_{G_{1^{4}}}(\beta)$, under
the reduction mod $\mathfrak{p}$ map. 

By definition, an irreducible representation $\pi$ of $G$ is cuspidal
iff none of its 1-dimensional twists $\pi\otimes\chi\circ\det$ has
any non-zero vectors fixed under any group $U_{i,j}$ or $U_{\lambda\hookrightarrow2^{4}}$,
or equivalently (by Frobenius reciprocity), if $\pi\otimes\chi\circ\det$
does not contain the trivial representation $\mathbf{1}$ when restricted
to $U_{i,j}$ or $U_{\lambda\hookrightarrow2^{4}}$. The groups $U_{i,j}$
are analogs of unipotent radicals of (proper) maximal parabolic subgroups
of $G$, and $U_{\lambda\hookrightarrow2^{4}}$ are the infinitesimal
analogs of unipotent radicals (cf. \cite{Aubert-Onn-Prasad}, Sect.~3).
Note that since $\Ind_{U_{i,j}}^{G}\mathbf{1}=\Ind_{U_{i,j}}^{G}(\mathbf{1}\otimes\chi\circ\det)=(\Ind_{U_{i,j}}^{G}\mathbf{1})\otimes\chi\circ\det$,
for any character $\chi:\mathfrak{o}_{2}^{\times}\rightarrow\mathbb{C}^{\times}$,
a representation is a subrepresentation of a geometrically induced
representation if and only if all its one-dimensional twists are.

In our situation, that is, for $n=4$ and $k=2$, there are three
distinct geometric stabilizers, $P_{1,3}$, $P_{2,2}$, and $P_{3,1}$
with {}``unipotent radicals'' $U_{1,3}$, $U_{2,2}$, and $U_{3,1}$,
respectively. Thus a representation is a subrepresentation of a geometrically
induced representation if and only if it is a component of $\Ind_{U_{i,j}}^{G}\mathbf{1}$,
for some $(i,j)\in\{(1,3),(2,2),(3,1)\}$. Furthermore, there are
three partitions, written in descending order, which embed in $2^{4}$
and give rise to non-trivial infinitesimal induction functors, namely\[
(2,1^{3}),\ (2^{2},1^{2}),\ (2^{3},1).\]
Thus a representation is a subrepresentation of an infinitesimally
induced representation if and only if it is a component of $\Ind_{U_{\lambda\hookrightarrow2^{4}}}^{G}\mathbf{1}$,
for some partition $\lambda$ as above. Because of the inclusions\[
U_{(2,1^{3})\hookrightarrow2^{4}}\subset U_{(2^{2},1^{2})\hookrightarrow2^{4}}\subset U_{(2^{3},1)\hookrightarrow2^{4}},\]
an irreducible representation of $G$ is a component of an infinitesimally
induced representation if and only if it is a component of $\Ind_{U_{(2,1^{3})\hookrightarrow2^{4}}}^{G}\mathbf{1}$.

\begin{lem}
\label{lem:first}Suppose that $\pi$ is an irreducible representation
of $G$ whose orbit contains either $\beta_{1}$ or $\beta_{2}$.
Then $\pi$ is not an irreducible component of any representation
geometrically induced from $P_{1,3}$ or $P_{3,1}$. Moreover, no
$1$-dimensional twist of $\pi$ is an irreducible component of an
infinitesimally induced representation.
\end{lem}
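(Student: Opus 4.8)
The plan is to handle both assertions uniformly by restricting $\pi$, and its one-dimensional twists, to the abelian normal subgroup $K_1$, feeding in only the following property of $\bar\beta_1$ and $\bar\beta_2$: their characteristic polynomial is $f(X)^2$ with $f$ irreducible of degree $2$. Recall that for $k=2$ the group $K_1$ is abelian (commutators lie in $K_2=1$), so $1+\varpi X\mapsto\bar X$ identifies it with $(M_4(\mathbb{F}_q),+)$; that $\psi_\gamma=\psi_{\gamma'}$ exactly when $\bar\gamma=\bar{\gamma'}$; and that, by Clifford theory applied to the normal subgroup $K_1$ of $G$, the restriction $\pi|_{K_1}$ is a sum of those $\psi_\gamma$ with $\bar\gamma$ in the $\mathrm{GL}_4(\mathbb{F}_q)$-conjugacy class of $\bar\beta_1$ or $\bar\beta_2$. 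For any such $\bar\gamma$ and any $\bar\gamma$-invariant $W\subseteq\mathbb{F}_q^4$, the characteristic polynomial of $\bar\gamma|_W$ is a monic divisor of $f(X)^2$, hence one of $1,f,f^2$, so $\dim W\in\{0,2,4\}$; in particular $\bar\gamma$ is invertible and has no invariant subspace of odd dimension. Since $\bar\gamma$ and $\bar\gamma+cI$ have the same invariant subspaces, the same is true of $\bar\gamma+cI$ for every $c\in\mathbb{F}_q$; and using $\det(1+\varpi X)=1+\varpi\,\mathrm{Tr}(X)$ one checks that the orbit of $\pi\otimes(\chi\circ\det)$ consists of the $\psi_{\gamma+c_\chi I}$ for a suitable $c_\chi\in\mathbb{F}_q$, so every one-dimensional twist of $\pi$ again has an orbit of this shape. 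This reduces the ``twist'' part of the statement to the untwisted case.

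The mechanism is the following. Suppose $U\le G$ is such that $U\cap K_1$ corresponds, under $1+\varpi X\mapsto\bar X$, to an $\mathbb{F}_q$-subspace $\mathfrak{u}\subseteq M_4(\mathbb{F}_q)$. If $0\ne v\in\pi$ is fixed by $U$, then $v$ is fixed by $U\cap K_1$; decomposing $v$ along the isotypic components of $\pi|_{K_1}$ and using $\psi_\gamma(1+\varpi X)=\psi(\varpi\,\mathrm{Tr}(\bar\gamma\bar X))$, we find that $v$ is supported on the $\gamma$ with $\mathrm{Tr}(\bar\gamma\bar X)=0$ for all $\bar X\in\mathfrak{u}$. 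Hence a nonzero $U$-fixed vector forces some $\bar\gamma$ in the orbit to lie in $\mathfrak{u}^{\perp}$ for the trace form. I would apply this with $U=U_{1,3}$, $U=U_{3,1}$ and $U=U_{(2,1^3)\hookrightarrow 2^4}$, each time checking that $\mathfrak{u}^{\perp}$ consists of matrices having an invariant coordinate subspace of odd dimension (with singular matrices allowed as a degenerate case), which is incompatible with the first paragraph; by Frobenius reciprocity and the reductions recalled in the text this yields both assertions.

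For the geometric radicals this is immediate: with the standard parabolics, $U_{i,j}\cap K_1$ corresponds to $\mathfrak{u}_{i,j}$, the space of matrices supported on the strictly-upper $i\times j$ block, and $\bar\gamma\in\mathfrak{u}_{1,3}^{\perp}$ (resp.\ $\mathfrak{u}_{3,1}^{\perp}$) is equivalent to the lower-left $3\times1$ (resp.\ $1\times3$) block of $\bar\gamma$ vanishing, i.e.\ to $\bar\gamma$ stabilising $\langle e_1\rangle$ (resp.\ $\langle e_1,e_2,e_3\rangle$) -- an invariant subspace of dimension $1$ (resp.\ $3$). That cannot happen, so $\pi$ has no nonzero $U_{1,3}$- or $U_{3,1}$-fixed vector, hence is not a component of any representation geometrically induced from $P_{1,3}$ or $P_{3,1}$. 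The analogous computation for $P_{2,2}$ only yields a $2$-dimensional invariant subspace, which is harmless -- exactly as it should be, since the lemma asserts nothing about $P_{2,2}$.

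For the infinitesimal radical it remains, by the reduction recalled in the text, to show that neither $\pi$ nor any twist $\pi\otimes(\chi\circ\det)$ is a component of $\Ind_{U}^{G}\mathbf{1}$ with $U=U_{(2,1^3)\hookrightarrow 2^4}$; by the first paragraph all of these representations have orbits of the above shape, so it is enough to run the mechanism once. The point is to read off from \cite{Aubert-Onn-Prasad}, Sect.~3, the explicit form of $U_{(2,1^3)\hookrightarrow 2^4}$ inside $K_1$: it refines the coordinate splitting of type $1+3$ (or $3+1$) strongly enough that triviality of $\psi_\gamma$ on it forces $\bar\gamma$ to stabilise a coordinate line or a coordinate hyperplane (in the extreme, to have a zero row or column), which we have just seen is impossible. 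Granting this, $\pi$ and all its twists have no nonzero $U$-fixed vector, and the lemma follows. I expect the only real obstacle to be the bookkeeping in this last step -- identifying $\mathfrak{u}$ precisely and confirming the orthogonality claim; once that is done, the entire proof rests on the single observation that the $\mathbb{F}_q[X]$-modules afforded by $\bar\beta_1$ and $\bar\beta_2$ have no submodule of odd dimension.
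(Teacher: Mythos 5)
Your proposal follows essentially the same route as the paper's proof: restrict to $K_{1}$ (resp.\ $K_{1}\cap U_{i,j}$), use that triviality of some $\psi_{\gamma}$ on the radical forces, via the trace pairing, vanishing conditions on $\bar{\gamma}$ that are incompatible with the characteristic polynomial $f(X)^{2}$ ($f$ irreducible quadratic), and handle one-dimensional twists by the scalar shift $\gamma\mapsto\gamma+cI$, which is exactly the paper's $\psi_{aI+b}$ argument. The single step you left as bookkeeping resolves as you anticipated: $U_{(2,1^{3})\hookrightarrow2^{4}}$ is contained in $K_{1}$ and corresponds to matrices supported on one column, so triviality of $\psi_{\gamma}$ on it forces a zero row of $\bar{\gamma}$, hence an invariant coordinate hyperplane (equivalently a linear factor of the characteristic polynomial even after any scalar shift), which completes your argument just as in the paper.
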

\begin{proof}
If $\pi$ were a component of $\Ind_{U_{1,3}}^{G}\mathbf{1}$, then
$\langle\pi|_{U_{1,3}},\mathbf{1}\rangle\neq0$, so in particular
$\langle\pi|_{K_{1}\cap U_{1,3}},\mathbf{1}\rangle\neq0$, which implies
that $\pi|_{K_{1}}$ contains a character $\psi_{b}$, where $b=(b_{ij})$
is a matrix such that $b_{i1}=0$ for $i=2,3,4$. This means that
the characteristic polynomial of $b$ would have a linear factor,
which contradicts the hypothesis. The case of $U_{3,1}$ is treated
in exactly the same way, except that the matrix $b$ will have $b_{4j}=0$
for $j=1,2,3$. The case of infinitesimal induction is treated using
the same kind of argument. Namely, if $\pi$ were a component of $\Ind_{U_{(2,1^{3})\hookrightarrow2^{4}}}^{G}\mathbf{1}$,
then $U_{(2,1^{3})\hookrightarrow2^{4}}\subset K_{1}$ and $\langle\pi|_{U_{(2,1^{3})\hookrightarrow2^{4}}},\mathbf{1}\rangle\neq0$,
which implies that $\pi|_{K_{1}}$ contains a character $\psi_{b}$,
where $b=(b_{ij})$ is a matrix such that $b_{1j}=0$ for $j=1,\dots,4$.
A $1$-dimensional twist of $\pi$ would then contain a character
$\psi_{aI+b}$, where $a$ is a scalar and $I$ is the identity matrix.
The matrix $aI+b$ has a linear factor in its characteristic polynomial,
which contradicts the hypothesis.
\end{proof}
We now consider in order representations whose orbits contain $\beta_{1}$
or $\beta_{2}$, respectively. In the following we will write $\bar{P}_{2,2}$
and $\bar{U}_{2,2}$ for the images mod $\mathfrak{p}$ of the groups
$P_{2,2}$ and $U_{2,2}$, respectively.

\section{The regular cuspidal representations}

Assume that $\pi$ is an irreducible representation of $G$ whose
orbit contains $\beta_{1}$. Since $\beta_{1}$ is a regular element,
the representation $\pi$ can be constructed explicitly as an induced
representation (cf. \cite{Hill_regular}). In particular, it is shown
in \cite{Hill_regular} that there exists a $1$-dimensional representation
$\rho$ of $G(\psi_{\beta_{1}})$ (uniquely determined by $\pi$)
such that $\rho|_{K_{1}}=\psi_{\beta_{1}}$, and such that\[
\pi=\Ind_{G(\psi_{\beta_{1}})}^{G}\rho.\]

\begin{prop}
\label{pro:cuspidal beta1}The representation $\pi$ is cuspidal if
and only if $\rho$ does not contain the trivial representation of
$G(\psi_{\beta_{1}})\cap U_{2,2}$.
\end{prop}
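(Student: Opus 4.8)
The plan is to reduce the cuspidality of $\pi$ to a single restriction condition via Lemma~\ref{lem:first}, and then to read that condition off a Mackey decomposition of $\pi|_{U_{2,2}}$. By Lemma~\ref{lem:first}, $\pi$ is not a component of $\Ind_{U_{1,3}}^{G}\mathbf{1}$ or of $\Ind_{U_{3,1}}^{G}\mathbf{1}$, and no $1$-dimensional twist of $\pi$ is a component of an infinitesimally induced representation; since moreover each $\Ind_{U_{i,j}}^{G}\mathbf{1}$ is invariant under twisting by $\chi\circ\det$, it suffices for the remaining case to examine $\pi$ itself. Hence $\pi$ is cuspidal if and only if $\pi$ is not a component of $\Ind_{U_{2,2}}^{G}\mathbf{1}$, which by Frobenius reciprocity is equivalent to $\langle\mathbf{1},\pi|_{U_{2,2}}\rangle=0$.

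Next I would apply Mackey's formula to $\pi=\Ind_{G(\psi_{\beta_1})}^{G}\rho$: this expresses $\langle\mathbf{1},\pi|_{U_{2,2}}\rangle$ as a sum over double cosets $s\in U_{2,2}\backslash G/G(\psi_{\beta_1})$, the term of $s$ being $1$ if $\rho$ is trivial on $s^{-1}U_{2,2}s\cap G(\psi_{\beta_1})$ and $0$ otherwise (using that $\rho$ is one-dimensional). The coset of $s=1$ contributes precisely $\langle\mathbf{1},\rho|_{G(\psi_{\beta_1})\cap U_{2,2}}\rangle$. This already yields the ``only if'' implication: if $\rho$ contains the trivial representation of $G(\psi_{\beta_1})\cap U_{2,2}$, then this term is nonzero, so $\pi$ is a component of $\Ind_{U_{2,2}}^{G}\mathbf{1}$ and is not cuspidal.

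For the converse I would assume that $\rho|_{G(\psi_{\beta_1})\cap U_{2,2}}$ is nontrivial and show that \emph{every} term of the Mackey sum vanishes, distinguishing the cases $\bar{s}\in\bar{P}_{2,2}$ and $\bar{s}\notin\bar{P}_{2,2}$. Since $K_1\subseteq G(\psi_{\beta_1})$ and $K_1$ is normal in $G$, one has $s^{-1}U_{2,2}s\cap K_1=s^{-1}(U_{2,2}\cap K_1)s$, and a short block computation on $U_{2,2}\cap K_1\cong M_2(\mathbb{F}_{q})$ shows that $\rho$ restricts to this subgroup as the character attached to $\psi_{\bar{s}\beta_1\bar{s}^{-1}}$; this is nontrivial exactly when the lower-left $2\times2$ block of $\bar{s}\beta_1\bar{s}^{-1}$ is nonzero, i.e.\ exactly when $\bar{s}\beta_1\bar{s}^{-1}\notin\bar{P}_{2,2}$, i.e.\ exactly when $\beta_1$ does not stabilize $\bar{s}^{-1}\langle e_1,e_2\rangle$. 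Here the regularity of $\beta_1$ is essential: having a single rational canonical block, with invariant factor $f(X)^2$, the space $\mathbb{F}_{q}^{4}$ is free of rank one over $\mathbb{F}_{q}[\beta_1]\cong\mathbb{F}_{q}[X]/(f(X)^2)$, and therefore has a unique $\beta_1$-invariant $2$-dimensional subspace, namely $\mathrm{im}\,f(\beta_1)=\langle e_1,e_2\rangle$. Consequently $\beta_1$ stabilizes $\bar{s}^{-1}\langle e_1,e_2\rangle$ only if $\bar{s}\in\mathrm{Stab}_{\bar{G}}(\langle e_1,e_2\rangle)=\bar{P}_{2,2}$; so for $\bar{s}\notin\bar{P}_{2,2}$ the restriction of $\rho$ to $s^{-1}U_{2,2}s\cap K_1$, and a fortiori to $s^{-1}U_{2,2}s\cap G(\psi_{\beta_1})$, is nontrivial and the $s$-term vanishes. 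If instead $\bar{s}\in\bar{P}_{2,2}$, i.e.\ $s\in P_{2,2}$, then $s^{-1}U_{2,2}s=U_{2,2}$ because $U_{2,2}$ is normal in $P_{2,2}$, so the $s$-term equals $\langle\mathbf{1},\rho|_{G(\psi_{\beta_1})\cap U_{2,2}}\rangle=0$ by assumption. Hence $\langle\mathbf{1},\pi|_{U_{2,2}}\rangle=0$ and $\pi$ is cuspidal.

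The step I expect to be the main obstacle is establishing the uniqueness of the $\beta_1$-invariant $2$-dimensional subspace, equivalently that $\mathbb{F}_{q}^{4}$ is cyclic over $\mathbb{F}_{q}[\beta_1]$: this is exactly where the regularity of $\beta_1$ enters, and it is what forces the Mackey sum to collapse to the single double coset of $s=1$. (For the irregular element $\beta_2$ this uniqueness fails, which is why the next section will need a different argument.)
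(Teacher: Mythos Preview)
Your argument is correct and follows the same overall strategy as the paper: reduce to $U_{2,2}$ via Lemma~\ref{lem:first}, apply Mackey, and show that any double-coset term that could be nonzero already coincides with the $s=1$ term. The one substantive difference is how you prove the implication $\bar{s}\beta_1\bar{s}^{-1}\in\bar{P}_{2,2}\Rightarrow\bar{s}\in\bar{P}_{2,2}$. You do it structurally, using that regularity makes $\mathbb{F}_q^4$ cyclic over $\mathbb{F}_q[\beta_1]\cong\mathbb{F}_q[X]/(f(X)^2)$ and hence has a unique $2$-dimensional invariant subspace. The paper instead works by hand: after conjugating the diagonal blocks to $\eta$ by an element $p\in\bar{P}_{2,2}$, the Jordan decomposition forces $\bar{x}p\in C_{G_{1^4}}\!\left(\begin{smallmatrix}\eta&0\\0&\eta\end{smallmatrix}\right)=\mathrm{GL}_2(\mathbb{F}_{q^2})$, and then conjugating the Jordan block $\left(\begin{smallmatrix}\eta&1\\0&\eta\end{smallmatrix}\right)$ to block-upper-triangular inside $\mathrm{GL}_2(\mathbb{F}_{q^2})$ forces $\bar{x}p\in\bar{P}_{2,2}$. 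Your version is cleaner for $\beta_1$; the paper's has the advantage that it transfers almost verbatim to $\beta_2$: there one still gets $\bar{x}p\in\mathrm{GL}_2(\mathbb{F}_{q^2})=G(\psi_{\beta_2})/K_1$, hence $x\in G(\psi_{\beta_2})P_{2,2}$, and the extra $G(\psi_{\beta_2})$ factor is absorbed by the double coset. So your closing remark that the irregular case ``will need a different argument'' is a bit pessimistic. One small imprecision: from $\bar{s}\in\bar{P}_{2,2}$ you should conclude $s\in K_1P_{2,2}$ rather than $s\in P_{2,2}$; since $K_1\subseteq G(\psi_{\beta_1})$ this is harmless for the double-coset computation, but it is exactly the absorption step the paper spells out.
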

\begin{proof}
Lemma \ref{lem:first} shows that $\pi$ is cuspidal if and only if
it is not a component of $\Ind_{U_{2,2}}^{G}\mathbf{1}$. By Mackey's
intertwining number theorem (cf. \cite{Curtis_Reiner}, 44.5), we
have\[
\langle\pi,\Ind_{U_{2,2}}^{G}\mathbf{1}\rangle=\langle\Ind_{G(\psi_{\beta_{1}})}^{G}\rho,\Ind_{U_{2,2}}^{G}\mathbf{1}\rangle=\sum_{x\in G(\psi_{\beta_{1}})\backslash G/U_{2,2}}\langle\rho|_{G(\psi_{\beta_{1}})\cap\leftexp{x}{U_{2,2}}},\mathbf{1}\rangle,\]
so this number is zero if and only if $\langle\rho|_{G(\psi_{\beta_{1}})\cap\leftexp{x}{U_{2,2}}},\mathbf{1}\rangle=0$
for each $x\in G$. Assume that $\pi$ is cuspidal. Then in particular,
taking $x=1$, we have $\langle\rho|_{G(\psi_{\beta_{1}})\cap U_{2,2}},\mathbf{1}\rangle=0$.

Conversely, assume that $\pi$ is not cuspidal. Then $\langle\rho|_{G(\psi_{\beta_{1}})\cap^{x}U_{2,2}},\mathbf{1}\rangle\neq0$,
for some $x\in G$, and in particular, $\langle\rho|_{K_{1}\cap\leftexp{x}{U_{2,2}}},\mathbf{\mathbf{1}}\rangle=\langle\psi_{\beta_{1}}|_{K_{1}\cap\leftexp{x}{U_{2,2}}},\mathbf{1}\rangle\neq0$.
Write $\bar{x}$ for $x$ modulo $\mathfrak{p}$. Now $\psi_{\beta_{1}}|_{K_{1}\cap\leftexp{x}{U_{2,2}}}=\psi_{\beta_{1}}|_{\leftexp{x}{(K_{1}\cap U_{2,2})}}$,
and $\psi_{\beta_{1}}(\leftexp{x}{g})=\psi_{\bar{x}^{-1}\beta_{1}\bar{x}}(g)$,
for any $g\in K_{1}\cap U_{2,2}$. Let $\bar{x}^{-1}\beta_{1}\bar{x}$
be represented by the matrix\[
\begin{pmatrix}A_{11} & A_{12}\\
A_{21} & A_{22}\end{pmatrix},\]
where each $A_{ij}$ is a $2\times2$-block. Then from the definition
of $\psi_{\bar{x}^{-1}\beta_{1}\bar{x}}$ and the condition $\psi_{\bar{x}^{-1}\beta_{1}\bar{x}}(g)=1$,
for all $g\in K_{1}\cap U_{2,2}$, it follows that $A_{21}=0$; thus
\[
\bar{x}^{-1}\beta_{1}\bar{x}\in\bar{P}_{2,2}.\]
Since $\bar{x}^{-1}\beta_{1}\bar{x}$ is a block upper-triangular
matrix with the same characteristic polynomial as $\beta_{1}$, we
must have $A_{11}=B_{1}\eta B_{1}^{-1}$, $A_{22}=B_{2}\eta B_{2}^{-1}$,
for some $B_{1},B_{2}\in\mbox{GL}_{2}(\mathbb{F}_{q})$. Then there
exists $p\in\bar{P}_{2,2}$ such that \[
(\bar{x}p)^{-1}\beta_{1}(\bar{x}p)=\begin{pmatrix}\eta & B\\
0 & \eta\end{pmatrix},\]
for some $B\in M_{2}(\mathbb{F}_{q})$ (in fact, we can take $p=\left(\begin{smallmatrix}B_{1}^{-1} & 0\\
0 & B_{2}^{-1}\end{smallmatrix}\right)$). Levi decomposition of $\beta_{1}$ and $(\bar{x}p)^{-1}\beta_{1}(\bar{x}p)$
implies that the semisimple parts $(\bar{x}p)^{-1}\left(\begin{smallmatrix}\eta & 0\\
0 & \eta\end{smallmatrix}\right)(\bar{x}p)$ and $\left(\begin{smallmatrix}\eta & 0\\
0 & \eta\end{smallmatrix}\right)$ are equal, that is, $\bar{x}p\in C_{G_{1^{4}}}(\left(\begin{smallmatrix}\eta & 0\\
0 & \eta\end{smallmatrix}\right))=\mbox{GL}_{2}(\mathbb{F}_{q^{2}})$. Now, in $\mbox{GL}_{2}(\mathbb{F}_{q^{2}})$, the equation $(\bar{x}p)^{-1}\beta_{1}(\bar{x}p)=\left(\begin{smallmatrix}\eta & B\\
0 & \eta\end{smallmatrix}\right)$ implies that $\bar{x}p\in\left(\begin{smallmatrix}* & *\\
0 & *\end{smallmatrix}\right)\subset\bar{P}_{2,2}$, so $\bar{x}\in\bar{P}_{2,2}$, and hence $x\in K_{1}P_{2,2}$. The
facts that $U_{2,2}$ is normal in $P_{2,2}$, and that $\langle\rho|_{G(\psi_{\beta_{1}})\cap\leftexp{x}{U_{2,2}}},\mathbf{1}\rangle$
only depends on the right coset of $x$ modulo $K_{1}$ then imply
that \[
0\neq\langle\rho|_{G(\psi_{\beta_{1}})\cap\leftexp{x}{U_{2,2}}},\mathbf{1}\rangle=\langle\rho|_{G(\psi_{\beta_{1}})\cap U_{2,2}},\mathbf{1}\rangle.\]

\end{proof}
The preceding proposition shows that we can construct all the cuspidal
representations of $G$ with orbit containing $\beta_{1}$ by constructing
the corresponding $\rho$ on $G(\psi_{\beta_{1}})$. Since $\psi_{\beta_{1}}$
is trivial on $K_{1}\cap U_{2,2}$, we can extend $\psi_{\beta_{1}}$
to a character of $(G(\psi_{\beta_{1}})\cap U_{2,2})K_{1}$, trivial
on $G(\psi_{\beta_{1}})\cap U_{2,2}$. Then $\psi_{\beta_{1}}$ can
be extended to a character $\tilde{\psi}_{\beta_{1}}$ on the whole
of $G(\psi_{\beta_{1}})$, such that $\tilde{\psi}_{\beta_{1}}$ is
trivial on $G(\psi_{\beta_{1}})\cap U_{2,2}$ (this incidentally shows
that there exist irreducible non-cuspidal representations of $G$
whose orbit contains $\beta_{1}$). Now let $\theta$ be a representation
of $G(\psi_{\beta_{1}})$ obtained by pulling back a representation
of $G(\psi_{\beta_{1}})/K_{1}$ that is non-trivial on $(G(\psi_{\beta_{1}})\cap U_{2,2})K_{1}/K_{1}$.
Then $\rho:=\theta\otimes\tilde{\psi}_{\beta_{1}}$ is non-trivial
on $G(\psi_{\beta_{1}})\cap U_{2,2}$, and all such representations
are obtained for some $\theta$ as above. 

Proposition \ref{pro:cuspidal beta1} shows that there is a canonical
1-1 correspondence between irreducible representations of $G(\psi_{\beta_{1}})$
which contain $\psi_{\beta_{1}}$ and are non-trivial on $G(\psi_{\beta_{1}})\cap U_{2,2}$,
and cuspidal representations of $G$ with $\beta_{1}$ in their respective
orbits. We shall now extend this result to cuspidal representations
which have $\beta_{2}$ in their respective orbits, and thus cover
all cuspidal representations of $G$.

\section{The irregular cuspidal representations}

Assume now that $\pi$ is an irreducible representation of $G$ whose
orbit contains $\beta_{2}$. Although $\beta_{2}$ is not regular,
it is strongly semisimple in the sense of \cite{Hill_semisimple_cuspidal},
Definition 3.1, and thus $\pi$ can be constructed explicitly in a
way similar to the regular case. More precisely, Proposition 3.3 in
\cite{Hill_semisimple_cuspidal} implies that there exists an irreducible
representation $\tilde{\psi}_{\beta_{2}}$ of $G(\psi_{\beta_{2}})$,
such that $\tilde{\psi}_{\beta_{2}}|_{K_{1}}=\psi_{\beta_{2}}$, and
any extension of $\psi_{\beta_{2}}$ to $G(\psi_{\beta_{2}})$ is
of the form $\rho:=\theta\otimes\tilde{\psi}_{\beta_{2}}$, for some
irreducible representation $\theta$ pulled back from a representation
of $G(\psi_{\beta_{2}})/K_{1}$. Then \[
\pi=\Ind_{G(\psi_{\beta_{2}})}^{G}\rho\]
is an irreducible representation, any representation of $G$ with
$\beta_{2}$ in its orbit is of this form, and as in the regular case,
$\rho$ is uniquely determined by $\pi$. We then have a result completely
analogous to the previous proposition:

\begin{prop}
The representation $\pi$ is cuspidal if and only if $\rho$ does
not contain the trivial representation of $G(\psi_{\beta_{2}})\cap U_{2,2}$.
\end{prop}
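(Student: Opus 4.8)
The plan is to imitate the proof of Proposition~\ref{pro:cuspidal beta1} essentially line for line, replacing $\beta_1$ by $\beta_2$; the only place that needs a genuinely new argument is the geometric step, which in the regular case relied on regularity and a Levi decomposition. By Lemma~\ref{lem:first}, $\pi$ is cuspidal if and only if it is not an irreducible component of $\Ind_{U_{2,2}}^G\mathbf{1}$, and by Mackey's intertwining number theorem (\cite{Curtis_Reiner}, 44.5),
\[
\langle\pi,\Ind_{U_{2,2}}^G\mathbf{1}\rangle=\sum_{x\in G(\psi_{\beta_2})\backslash G/U_{2,2}}\langle\rho|_{G(\psi_{\beta_2})\cap\leftexp{x}{U_{2,2}}},\mathbf{1}\rangle,
\]
a sum of non-negative integers. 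If $\pi$ is cuspidal this sum vanishes, so in particular the $x=1$ term $\langle\rho|_{G(\psi_{\beta_2})\cap U_{2,2}},\mathbf{1}\rangle$ vanishes, i.e.\ $\rho$ does not contain the trivial representation of $G(\psi_{\beta_2})\cap U_{2,2}$; this gives the ``only if'' direction.

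For the converse I would suppose $\pi$ is not cuspidal, so that $\langle\rho|_{G(\psi_{\beta_2})\cap\leftexp{x}{U_{2,2}}},\mathbf{1}\rangle\neq0$ for some $x\in G$, and then show this term equals the $x=1$ term. Exactly as in the proof of Proposition~\ref{pro:cuspidal beta1}, restricting from $G(\psi_{\beta_2})\cap\leftexp{x}{U_{2,2}}$ to $K_1\cap\leftexp{x}{U_{2,2}}=\leftexp{x}{(K_1\cap U_{2,2})}$ forces $\psi_{\bar x^{-1}\beta_2\bar x}$ to be trivial on $K_1\cap U_{2,2}$, and the same trace computation with the off-diagonal block shows that the lower-left $2\times2$ block of $\bar x^{-1}\beta_2\bar x$ vanishes, that is, $\bar x^{-1}\beta_2\bar x\in\bar P_{2,2}$.

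At this point the argument departs from the regular case, and this is the main obstacle. Since $\beta_2$ is semisimple, the condition $\bar x^{-1}\beta_2\bar x\in\bar P_{2,2}$ says precisely that $\bar x$ carries the $2$-dimensional subspace $W=\langle e_1,e_2\rangle$ of $\mathbb{F}_q^4$ stabilized by $\bar P_{2,2}$ to a $\beta_2$-invariant subspace. Identifying $\mathbb{F}_q^4$ with $\mathbb{F}_{q^2}^2$ via the basis $\{1,\eta\}$, the matrix $\beta_2$ becomes the scalar $\eta\in M_2(\mathbb{F}_{q^2})$, so its invariant $\mathbb{F}_q$-subspaces are exactly the $\mathbb{F}_{q^2}$-subspaces; the two-dimensional ones form a single orbit under $C_{G_{1^4}}(\beta_2)=\mathrm{GL}_2(\mathbb{F}_{q^2})$, namely $\mathrm{GL}_2(\mathbb{F}_{q^2})\cdot W$, since $\mathrm{GL}_2(\mathbb{F}_{q^2})$ is transitive on $\mathbb{P}^1(\mathbb{F}_{q^2})$ and the stabilizer of $W$ in $G_{1^4}$ is $\bar P_{2,2}$. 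Hence $\bar xW\in\mathrm{GL}_2(\mathbb{F}_{q^2})\cdot W$, so $\bar x\in\mathrm{GL}_2(\mathbb{F}_{q^2})\bar P_{2,2}$. (One could instead conjugate $\bar x^{-1}\beta_2\bar x$ inside $\bar P_{2,2}$ into the Levi subgroup and use that any two semisimple $4\times4$ matrices with characteristic polynomial $f(X)^2$, $f$ irreducible of degree $2$, are conjugate by a block-diagonal matrix.) Now $G(\psi_{\beta_2})$ is the full preimage of $\mathrm{GL}_2(\mathbb{F}_{q^2})$ under reduction mod $\mathfrak{p}$ (\cite{Hill_regular}, Proposition 2.3), and in particular contains $K_1$, while $P_{2,2}$ reduces onto $\bar P_{2,2}$; therefore $x\in G(\psi_{\beta_2})P_{2,2}$, say $x=yp$ with $y\in G(\psi_{\beta_2})$ and $p\in P_{2,2}$. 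Since $U_{2,2}$ is normal in $P_{2,2}$ we have $\leftexp{x}{U_{2,2}}=\leftexp{y}{U_{2,2}}$, hence $G(\psi_{\beta_2})\cap\leftexp{x}{U_{2,2}}=\leftexp{y}{(G(\psi_{\beta_2})\cap U_{2,2})}$, and because $\rho$ is a representation of $G(\psi_{\beta_2})$ and $y\in G(\psi_{\beta_2})$,
\[
0\neq\langle\rho|_{G(\psi_{\beta_2})\cap\leftexp{x}{U_{2,2}}},\mathbf{1}\rangle=\langle\rho|_{G(\psi_{\beta_2})\cap U_{2,2}},\mathbf{1}\rangle,
\]
so $\rho$ contains the trivial representation of $G(\psi_{\beta_2})\cap U_{2,2}$, which completes the argument. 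The subtlety compared with the regular case is that, $\beta_2$ being non-regular, the relation $\bar x^{-1}\beta_2\bar x\in\bar P_{2,2}$ only determines $\bar x$ modulo the larger group $\mathrm{GL}_2(\mathbb{F}_{q^2})\bar P_{2,2}$ rather than modulo $\bar P_{2,2}$, which is exactly why the full stabilizer $G(\psi_{\beta_2})$ (and not merely $K_1$) must be used at the final step.
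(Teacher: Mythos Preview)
Your proof is correct and follows essentially the same strategy as the paper: reduce via Lemma~\ref{lem:first} and Mackey's theorem to showing that any $x$ contributing a nonzero term satisfies $x\in G(\psi_{\beta_2})P_{2,2}$, then use normality of $U_{2,2}$ in $P_{2,2}$ to collapse to the $x=1$ term. The only difference is cosmetic: for the geometric step the paper simply reruns the $\beta_1$ argument (block-diagonal conjugation plus Levi decomposition) to obtain $\bar{x}p\in C_{G_{1^4}}(\beta_2)=\mathrm{GL}_2(\mathbb{F}_{q^2})$, which is exactly the alternative you mention parenthetically, whereas your primary argument via $\beta_2$-invariant subspaces reaches the same conclusion $\bar{x}\in\mathrm{GL}_2(\mathbb{F}_{q^2})\bar{P}_{2,2}$ more directly.
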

\begin{proof}
The proof of Proposition \ref{pro:cuspidal beta1} with $\beta_{1}$
replaced by $\beta_{2}$, goes through up to the point where (under
the assumption that $\pi$ is not cuspidal) we get $\bar{x}p\in C_{G_{1^{4}}}(\left(\begin{smallmatrix}\eta & 0\\
0 & \eta\end{smallmatrix}\right))=G(\psi_{\beta_{2}})/K_{1}$. It then follows that $x\in G(\psi_{\beta_{2}})P_{2,2}$, and since
$U_{2,2}$ is normal in $P_{2,2}$, and $\langle\rho|_{G(\psi_{\beta_{2}})\cap\leftexp{x}{U_{2,2}}},\mathbf{1}\rangle$
only depends on the right coset of $x$ modulo $G(\psi_{\beta_{2}})$,
we get \[
0\neq\langle\rho|_{G(\psi_{\beta_{2}})\cap\leftexp{x}{U_{2,2}}},\mathbf{1}\rangle=\langle\rho|_{G(\psi_{\beta_{2}})\cap U_{2,2}},\mathbf{1}\rangle.\]

\end{proof}
\bibliographystyle{alex}
\bibliography{alex}

\end{document}